\newtheorem{theorem}{Theorem}
\newtheorem{lemma}[theorem]{Lemma}
\newtheorem{corollary}[theorem]{Corollary}
\newtheorem{proposition}[theorem]{Proposition}
\newtheorem{example}[theorem]{Example}
\theoremstyle{remark}
\theoremstyle{definition}
\newcommand{\Hom}{\mbox{Hom}}
\newcommand{\Mor}{\mbox{Mor}}
\newcommand{\Set}{\mbox{Set}}
\newcommand{\Z}{\mathbb{Z}}
\newcommand{\Q}{\mathbb{Q}}
\newcommand{\termin}{\text{up to zeros}}
\DeclareMathOperator{\mF}{\mathbf{F}}
\DeclareMathOperator{\id}{\rm id}
\DeclareMathOperator{\A}{\rm Act}
\DeclareMathOperator{\oA}{\overline{\rm Act}}
\begin{document}
\title{Perfect monoids with zero and categories of $S$-acts}

\author{Josef Dvo\v r\'ak}
\email{pepa.dvorak@post.cz}
\address{CTU in Prague, FEE, Department of mathematics, Technick\'a 2, 166 27 Prague 6 \&
	MFF UK, Department of Algebra,  Sokolovsk\' a 83, 186 75 Praha 8, Czechia}

\author{Jan \v Zemli\v cka}
\email{zemlicka@karlin.mff.cuni.cz}
\address{Department of Algebra, Charles University in Prague,
	Faculty of Mathematics and Physics Sokolovsk\' a 83, 186 75 Praha 8, Czechia}

\begin{abstract} 
In this paper, we study the relationship between the two main categories of $S$-acts for a monoid $S$ with zero from the viewpoint of existence of projective covers and the equivalence is proven. Furthermore, monoids with zeros over which all compact acts are cyclic are characterized.
	
\end{abstract}

\subjclass[2010]{20M50  (20M30)}

\keywords{act over a monoid, perfect monoid, steady monoid}

\thanks{This work is part of the project SVV-2020-260589}

\maketitle

\section{Introduction}

The usefulness of the notion of projective cover within the context of module theory has been confirmed in countless occasions since the publication of the founding works of Bass \cite{Bass60}, who coined the term, and Eilenberg \cite{Eil56}, who effectively considered the notion for the first time. Together with the idea of projective cover, the closely related notion of a perfect ring, for which projective covers exist in the corresponding module category, appears. The definition of both terms uses a purely categorial language, yet structural and homological characterizations can be given, e.g.,
\begin{theorem}\label{perf_ring} \cite{Bass60}
	The following conditions are equivalent for a ring $R$
	\begin{enumerate}
		\item $R$ is left perfect
		\item $R$ satisfies d.c.c. on principal right ideals 
		\item the class of projective $R$-modules coincides with the class of flat $R$-modules.
	\end{enumerate}  
\end{theorem}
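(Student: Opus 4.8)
The plan is to route all three conditions through the structural characterization
\[
(\star)\qquad R/J\text{ is semisimple Artinian and }J=J(R)\text{ is left }T\text{-nilpotent},
\]
where left $T$-nilpotence means that for every sequence $a_1,a_2,\dots\in J$ there is some $n$ with $a_1a_2\cdots a_n=0$. The central technical fact I would isolate first is the module-theoretic reading of $T$-nilpotence (Bass's lemma): $J$ is left $T$-nilpotent if and only if $JM$ is superfluous in $M$ for every left $R$-module $M$, equivalently $JM\neq M$ for every nonzero $M$. One direction is a Nakayama-type argument; the converse builds, from a sequence with all partial products nonzero, a countably generated module $M$ with $JM=M$ via the free module on symbols $x_i$ modulo the relations $x_i=a_ix_{i+1}$.

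Next I would prove $(2)\Rightarrow(\star)$. For $T$-nilpotence, given $a_i\in J$ consider the chain $a_1R\supseteq a_1a_2R\supseteq\cdots$; by $(2)$ it stabilizes, so $a_1\cdots a_n=a_1\cdots a_{n+1}r$ for some $r$, whence $a_1\cdots a_n(1-a_{n+1}r)=0$, and invertibility of $1-a_{n+1}r$ forces $a_1\cdots a_n=0$. For semisimplicity I would pass to $\bar R=R/J$, observe that $(2)$ descends to it, and use that a semiprimitive ring with d.c.c.\ on principal right ideals contains no infinite orthogonal family of idempotents and splits off minimal (hence idempotent-generated) right ideals, forcing $\bar R$ to be a finite direct sum of minimal right ideals, i.e.\ semisimple.

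Then comes $(\star)\Rightarrow(1)$, which I expect to be the main obstacle. Given an arbitrary $M$, the quotient $M/JM$ is a module over the semisimple ring $R/J$, hence semisimple, and is a homomorphic image of a free module $P$ chosen so that the induced $P/JP\to M/JM$ is an isomorphism. Lifting to a map $f\colon P\to M$, surjectivity follows because $\operatorname{im}f+JM=M$ and $JM$ is superfluous by the first lemma, while $\ker f\subseteq JP$ is then superfluous in $P$, again by $T$-nilpotence applied to $P$. The delicate point is precisely that classical Nakayama is unavailable for infinitely generated $M$: it is the $T$-nilpotence of $J$ in full generality, not mere nilpotence or smallness of the radical on finitely generated modules, that makes both $JM$ and $\ker f$ superfluous.

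Finally I would close the loop through flatness. For $(1)\Rightarrow(3)$ I would prove the auxiliary fact that a flat module admitting a projective cover is projective: if $p\colon P\to F$ is such a cover with superfluous kernel $K$, flatness yields $K=JK$ relations which, combined with superfluousness, give $K=0$; since $(1)$ supplies projective covers for all modules, every flat left module is projective. For $(3)\Rightarrow(2)$ I would argue contrapositively: from a strictly descending chain $a_1R\supsetneq a_1a_2R\supsetneq\cdots$ form the direct limit $F=\varinjlim(R\xrightarrow{\cdot a_1}R\xrightarrow{\cdot a_2}\cdots)$, which is flat as a direct limit of free modules, and a support/lifting argument shows that strict descent makes $F$ fail to be projective, contradicting $(3)$. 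This gives the cycle $(3)\Rightarrow(2)\Rightarrow(\star)\Rightarrow(1)\Rightarrow(3)$. The two genuinely hard edges are the projective-cover construction in $(\star)\Rightarrow(1)$ and the non-projectivity of the limit module in $(3)\Rightarrow(2)$; everything else is bookkeeping around Nakayama and idempotent lifting.
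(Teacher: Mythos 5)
The paper does not prove this theorem at all: it is quoted as background and attributed to \cite{Bass60}, so there is no internal proof to compare against. Your outline is, in essence, the classical proof of Bass's Theorem P as it appears in Bass's original paper and in standard references (e.g.\ Anderson--Fuller, Theorem 28.4): route everything through ``$R/J$ semisimple and $J$ left $T$-nilpotent,'' with the superfluous-submodule reading of $T$-nilpotence as the workhorse lemma. The architecture and the left/right bookkeeping (left perfect $\leftrightarrow$ d.c.c.\ on principal \emph{right} ideals $\leftrightarrow$ flat \emph{left} modules projective) are correct, and the computation $a_1\cdots a_n(1-a_{n+1}r)=0$ with $1-a_{n+1}r$ invertible is exactly right.

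Two points need repair before this is a proof rather than a road map. First, in $(\star)\Rightarrow(1)$ you cannot in general take $P$ to be a \emph{free} module with $P/JP\to M/JM$ an isomorphism: $M/JM$ is a direct sum of simples, and the correct $P$ is the projective module $\bigoplus_i Re_i$ obtained by lifting idempotents modulo $J$ (possible since a left $T$-nilpotent ideal is nil) so that each $Re_i/Je_i$ matches a simple summand; a free $P$ would force $P/JP$ to be a direct sum of copies of $R/J$. Second, the two steps you flag as hard are genuinely where the content lives and are only gestured at: $(3)\Rightarrow(2)$ rests on Bass's splitting lemma, namely that for $F$ free on $x_1,x_2,\dots$ and $G$ the (free) submodule generated by the $x_i-a_ix_{i+1}$, the quotient $F/G$ is the flat direct limit you describe, and $G$ is a direct summand of $F$ if and only if the chain $a_1R\supseteq a_1a_2R\supseteq\cdots$ terminates; projectivity of $F/G$ splits $0\to G\to F\to F/G\to 0$ and so forces termination. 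Similarly, in $(2)\Rightarrow(\star)$ you should note explicitly that d.c.c.\ on principal right ideals passes to $R/J$ because any descending chain in $R/J$ can be rewritten with each generator a right multiple of its predecessor and then lifted, and that Brauer's lemma (a minimal right ideal with nonzero square is idempotent-generated) is what lets you split off summands in the semiprimitive quotient. With those ingredients supplied, your cycle $(3)\Rightarrow(2)\Rightarrow(\star)\Rightarrow(1)\Rightarrow(3)$ closes and is the standard argument.
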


The corresponding notion within the branch of monoids and acts turned out to be similarly fruitful with applications to category theory and topological monoids (see \cite{I}). Note that the Theorem~\ref{perf_ring} has its counterpart stated for monoids:
\begin{theorem}\cite{Foun, I, K}
	The following conditions are equivalent for a monoid $S$
	\begin{enumerate}
		\item $S$ is left perfect
		\item $R$ satisfies the minimum condition on principal right ideals and each left $S$-act satisfies the a.c.c for cyclic subacts
		\item the class of projective $S$-acts coincides with the class of strongly flat left $S$-acts.
	\end{enumerate}  
\end{theorem}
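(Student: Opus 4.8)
The plan is to establish the cycle $(1)\Rightarrow(3)\Rightarrow(2)\Rightarrow(1)$, following the architecture of the proof of Theorem~\ref{perf_ring} but systematically replacing the module-theoretic devices by their act-theoretic analogues. The standing tools will be: the description of strongly flat acts as exactly those satisfying the interpolation conditions (P) and (E); the facts that projective acts are precisely the retracts of free acts and that every projective act is strongly flat; the presentation of a strongly flat act as a directed colimit $\varinjlim_{i} F_i$ of finitely generated free acts; and the notion of projective cover as a coessential (superfluous) epimorphism $p\colon P\to A$ with $P$ projective, where a homomorphism ${}_SS\to{}_SS$ of the free cyclic act is just right multiplication by an element of $S$.

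For $(1)\Rightarrow(3)$, the inclusion ``projective $\subseteq$ strongly flat'' is routine, since (P) and (E) hold trivially for free acts and are inherited by coproducts and retracts. For the reverse inclusion, a strongly flat act $A$ has, by perfectness, a projective cover $p\colon P\to A$; using the directed-colimit presentation of $A$ by finitely generated free acts to lift along the epimorphism $p$, and then using coessentiality to rule out any nontrivial identifications, I would conclude that $p$ is injective, hence an isomorphism, so that $A\cong P$ is projective.

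For $(3)\Rightarrow(2)$, I argue by contraposition and treat the two chain conditions separately. If the minimum condition on principal right ideals fails, choose a strictly descending chain $a_0S\supsetneq a_1S\supsetneq\cdots$, write $a_{n+1}=a_nc_{n+1}$, and form the directed colimit $C=\varinjlim\bigl({}_SS\xrightarrow{\;\cdot c_1\;}{}_SS\xrightarrow{\;\cdot c_2\;}\cdots\bigr)$ along the corresponding right multiplications. As a directed colimit of free acts, $C$ is strongly flat; it is also an increasing union of the cyclic subacts generated by the images of the successive generators, hence indecomposable. If $C$ were projective it would then be a single cyclic summand $Se$; but one checks that non-stabilization of the chain $a_nS$ is precisely the obstruction to $C$ being cyclic, so $C$ is strongly flat and not projective and $(3)$ fails. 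The failure of the a.c.c. on cyclic subacts of some left act is converted, by an analogous colimit of finitely generated free acts built from the ascending chain, into a second strongly flat non-projective act; the delicate point is to arrange the transition maps so that strong flatness and the obstruction to projectivity hold simultaneously.

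Finally, $(2)\Rightarrow(1)$ is the constructive heart of the theorem and the step I expect to be the main obstacle, exactly as flat $\Rightarrow$ projective is the core of Theorem~\ref{perf_ring}. Given a left $S$-act $A$, I would build a projective cover by an inductive procedure: the minimum condition on principal right ideals supplies idempotents allowing copies of $Se$ to be split off (the semiperfect-type step), while the a.c.c. on cyclic subacts of the relevant left acts guarantees both that the construction terminates and that the resulting epimorphism from a projective act is coessential, i.e. that no proper subact already maps onto $A$. The main difficulty, and the reason the act-theoretic proof cannot simply transcribe Bass's argument, is the absence of additive structure: one cannot form kernels or treat the radical as an ideal, so superfluity must be controlled entirely through congruences and the explicit colimit presentations rather than through $T$-nilpotency of a Jacobson radical.
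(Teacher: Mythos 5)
This theorem is quoted in the paper from \cite{Foun, I, K} without proof, so there is no internal argument to compare against; your proposal has to stand on its own, and as it stands it is an outline of the right shape with the genuinely hard steps left unperformed. The direction $(1)\Rightarrow(3)$ and the use of the presentation of strongly flat acts as directed colimits of finitely generated free acts are the correct act-theoretic analogues of Bass's machinery, and the colimit of ${}_SS\xrightarrow{\cdot c_1}{}_SS\xrightarrow{\cdot c_2}\cdots$ built from a non-terminating descending chain of principal right ideals is indeed the standard witness for one half of $(3)\Rightarrow(2)$. But in each case the sentence that carries the proof is an assertion: ``using coessentiality to rule out any nontrivial identifications, I would conclude that $p$ is injective'' requires an actual argument (one must use condition (E) to convert an identification $p(x)=p(y)$ into a proper subact of $P$ still mapping onto $A$, contradicting coessentiality --- this is a lemma of Fountain, not a formality), and ``one checks that non-stabilization of the chain $a_nS$ is precisely the obstruction to $C$ being cyclic'' conflates a condition on principal \emph{right} ideals with the cyclicity of a \emph{left} act; relating the two is exactly the act analogue of Bass's Lemma and must be proved.

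The most serious gap is the one you flag yourself and then set aside: producing a strongly flat non-projective act from the failure of the a.c.c.\ on cyclic subacts of an arbitrary left act. This is not a ``delicate point to arrange''; it is the historically hard implication (Fountain proved $(1)\Rightarrow(3)$ and $(1)\Leftrightarrow(2)$ in 1976, and the converse $(3)\Rightarrow(1)$ remained open until Kilp's 1996 paper cited here). A directed colimit of copies of ${}_SS$ along right multiplications is automatically strongly flat, so the entire content is in showing it fails to be projective, and the ascending chain you start from lives in an arbitrary act $A$, not in the colimit you build. Similarly, for $(2)\Rightarrow(1)$ your ``inductive procedure splitting off copies of $Se$'' does not engage with the actual mechanism: general acts decompose uniquely into indecomposable (not cyclic) components, and the established route --- visible in this paper as Theorem~\ref{char_perf} --- reduces perfectness to two separate statements, namely that every locally cyclic act is cyclic (this is where the a.c.c.\ on cyclic subacts enters) and that every cyclic act has a projective cover (this is where the minimum condition on principal right ideals enters). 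Without that reduction, or an equivalent substitute for the absent kernel/radical formalism, the inductive construction you describe cannot be carried out.
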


The previous result as well as other results have been formulated and considered within the context of the category $S-\oA$ (see below), but for a monoid with zero, the monograph \cite{KKM} introduces another natural category, $S-\A_0$, which turns out to possess notably different categorial properties regarding e.g. its extensivity or compactness of objects (cf. \cite{DZ21}), hence the question of relationship of these two categories from the viewpoint of perfectness arises naturally and the aim of the present paper is an investigation on this topic.

The question of perfectness appears to be related to the problem over which monoids $S$ (or rings) there exists a non-cyclic act such that the corresponding covariant 
$\Hom$-functor from a category of $S$-acts (or $S$-modules) commutes with coproducts (such monoid is then called non-steady). 
It is known that non-steady  monoids are necessarily non-perfect 
in the category $S-\oA$ (as well as in the case of modules).

The main tool of the paper is the functor $\mF:  S-\oA\to S-\A_0$
gluing all zero elements to one using Rees factor. It allows translating the properties of $S-\oA$ to the category $S-\A_0$.
Namely, Theorem~\ref{char_0perfect} shows that the left perfectness
of categories $S-\oA$ and  $S-\A_0$ coincide and 
a monoid with zero is left perfect if and only if  it is left 0-perfect 
and it is left 0-steady if and only if it satisfies the ascending chain condition on cyclic subacts by Theorem~\ref{char_0steady}.

\section{Preliminaries}

Before we begin the exposition, let us recall some necessary terminology and notations.

Let $\mathcal S=(S,\cdot, 1)$ be a monoid and $A$ a nonempty set. If there exists a mapping $-\cdot-: S \times A \rightarrow A$ satisfying the following two conditions: $1\cdot a = a$ and $(s_1\cdot s_2)\cdot a = s_1\cdot (s_2\cdot a)$ then $A$ is said to be a left $S$-act and it is denoted $_SA$.  A mapping $f: {}_{S}A \rightarrow {}_{S}B$ is a homomorphism of $S$-acts (an $S$-homomorphism) provided $f\left( sa\right) = s f\left( a\right)$ holds for all pairs $s\in S, a \in A$.
In compliance with \cite[Example I.6.5.]{KKM} we denote by
$S-\A$ the category of all left $S$-acts with homomorphisms of $S$-acts and $S-\oA$ the category $S-\A$ enriched by an initial object $_S\emptyset$.
Let the monoid $\mathcal S$ contain a (necessarily unique) zero element $0$, which satisfies $0\cdot s = s \cdot 0 = 0$ for all $s\in S$. Then the category of all left $S$-acts $A$ with a unique 
zero element $\theta_A=0A$ and homomorphisms of $S$-acts compatible with zero as morphisms will be denoted $S-\A_0$. Observe that $\theta:=\{0\}$ is the initial object of the category $S-\A_0$ (but not of the category $S-\oA$).

Recall that both of the categories $S-\oA$ and $S-\A_0$ are complete and cocomplete \cite[Remarks II.2.11, Remark II.2.22]{KKM}.
In particular, the coproduct of a system of objects $(A_i, i\in I)$ is
\begin{enumerate}
	\item[(i)] $\coprod_{i\in I} A_i =\dot{\bigcup} A_i $ in  $S$-$\oA$ by \cite[Proposition II.1.8]{KKM} and
	
	\item[(ii)] $\coprod_{i\in I} A_i =\{(a_i)\in\prod_{i\in I} A_i |\ \exists j: a_i=0 \forall i\ne j\}$ in  $S$-$\A_0$ by \cite[Remark II.1.16]{KKM}.
\end{enumerate}

Recall that for a subact $B$ of an act $A$ the \emph{Rees congruence} $\rho_B$ on $A$ is defined by setting $a_1 \rho a_2$ if $a_1 = a_2$ or $a_1, a_2 \in B$ and the corresponding factor act is denoted by $A/B$ (cf. \cite[Definition 4.20]{KKM} )

\section{The functor $\mF: S-\oA \to S-\A_0$}

Throughout the paper, all monoids are considered to {\bf contain the zero element} $0$, in particular, $S$ denotes a monoid $(S,\cdot,1)$ with the zero element $1$ and we suppose that $0\ne 1$.

Let $A$ be a left $S$-act. Since $Sz=0z=z$ for each $z\in 0A=\{0a\mid \in A\}$ we say that $0A$ is a set of zero elements.
Observe that $0A$ can contain more than one element in general
and notice that while a morphism $\alpha: C \to D$ in the category $S-\A_0$ is required to preserve the unique zero, i.~e., $\alpha(\theta_C) = \theta_D$, the category $S-\oA$ is less restrictive: for a morphism $\beta: A \to B$ the image of a zero element of $A$ from the set $0A$ is some zero element of $B$, in other words $\beta(0A) \subseteq 0B$. This leads to the following idea:

Define the functor $\mF$ from the category $S-\A$ to the category $S-\A_0$ as follows: 
\begin{itemize}
	\item for an object $A\in S-\A$, let $\mF (A) = A/0A$, i. e. the $S$-act obtained by gluing all zeroes of $A$ together or, in other words the image of the natural projection onto the Rees factor $\pi_{0A}: A \twoheadrightarrow A / 0A$
	\item for a morphism $\alpha: A\to B$ define $\mF(\alpha)$ in the natural way so that the following square commutes: 
	
	{
	\centering
	
	\begin{tikzcd}
		A \arrow[r, "\pi_{0A}", twoheadrightarrow] \arrow[d, "\alpha"]
		& \mF(A) \arrow[d, "\mF(\alpha)"] \\
		B \arrow[r, "\pi_{0B}", twoheadrightarrow]
		& \mF(B)
	\end{tikzcd}

	}	
\end{itemize}

The morphism $\mF(\alpha)$ can be obtained from the Homomorphism Theorem \cite[Theorem 4.21]{KKM}, since $\ker \pi_{0A} \subseteq \ker \pi_{0B}\alpha$. The explicit formula for $\mF(\alpha)$ is then: 
{	
$\begin{cases}
\mF(\alpha)([a]) = [\alpha(a)] \text{ for } a \not\in 0A\\
\mF(\alpha)(\theta_{\mF(A)}) = \theta_{\mF(B)}
\end{cases}$.
}

Now we formulate the key categorial observation on $\mF$; for the definition of a reflective subcategory we refer, e.g. to \cite[Definition 4.16]{AHS}.

\begin{proposition}
	The category $S-\A_0$ is a reflective subcategory of the category $S-\A$ via the reflector $\mF$.
\end{proposition}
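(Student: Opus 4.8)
The plan is to verify directly the universal property characterizing a reflection in the sense of \cite[Definition 4.16]{AHS}. First I would record that $S-\A_0$ is genuinely a \emph{full} subcategory of $S-\A$: if $C,D$ are objects carrying unique zeros $\theta_C,\theta_D$ and $f\colon C\to D$ is any $S$-homomorphism, then for an arbitrary $c\in C$ we have $f(\theta_C)=f(0c)=0f(c)=\theta_D$ by uniqueness of the zero in $D$, so $f$ automatically preserves the zero and is already a morphism of $S-\A_0$. Hence no morphisms are lost upon passing to the larger category, and it suffices to produce, for each $A\in S-\A$, a universal arrow from $A$ into $S-\A_0$. The natural candidate is the Rees projection $\pi_{0A}\colon A\to \mF(A)=A/0A$, which will serve as the unit of the purported adjunction.

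The core step is the universal property. I would fix $A\in S-\A$, an object $C\in S-\A_0$, and an arbitrary morphism $f\colon A\to C$ of $S-\A$. Since $C$ has a single zero, the same computation as above gives $f(z)=\theta_C$ for every $z\in 0A$, so $f$ is constant on the subact $0A$; equivalently $\ker\pi_{0A}=\rho_{0A}\subseteq\ker f$. The Homomorphism Theorem \cite[Theorem 4.21]{KKM} then yields a unique $S$-homomorphism $g\colon \mF(A)\to C$ with $g\circ\pi_{0A}=f$, given by $g([a])=f(a)$ for $a\notin 0A$ and $g(\theta_{\mF(A)})=\theta_C$. Because $g(\theta_{\mF(A)})=\theta_C$, the map $g$ preserves the zero and is therefore a bona fide morphism of $S-\A_0$. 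Uniqueness of $g$ is immediate from the surjectivity of $\pi_{0A}$: any two factorizations agree on the image of $\pi_{0A}$, which is all of $\mF(A)$.

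Finally I would observe that these reflection arrows are exactly compatible with the action of $\mF$ on morphisms as already defined through the commuting square, so that $\mF$ is precisely the reflector and the family $(\pi_{0A})_A$ its unit; naturality of the unit is nothing but the commutativity of that square.

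I expect no serious obstacle here: the entire argument rests on the single observation that a homomorphism into an act with a unique zero must collapse all of $0A$, after which everything reduces to the Rees-factor form of the Homomorphism Theorem. The only points demanding a little care are checking that the induced map $g$ really lands inside the subcategory $S-\A_0$ (i.e.\ preserves the zero) and that $S-\A_0$ is full in $S-\A$, so that a universal arrow into $S-\A_0$ coincides with a universal arrow into $S-\A$ with $C$ fixed in the subcategory.
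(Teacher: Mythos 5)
Your proposal is correct and follows essentially the same route as the paper: establish fullness of $S-\A_0$ in $S-\A$ by observing that any $S$-homomorphism into an act with a unique zero automatically preserves that zero, then verify the universal property of the unit $\pi_{0A}$ using the Homomorphism Theorem and the surjectivity of $\pi_{0A}$ for uniqueness. If anything, you spell out the existence half of the factorization (via $\ker\pi_{0A}\subseteq\ker f$) slightly more explicitly than the paper does, which is a harmless improvement.
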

	
	\begin{proof}
		Firstly, we show that $S-\A_0$ is a full subcategory of $S-\A$, i.e. $\Mor_{S-\A}(A,B)=\Mor_{S-\A_0}(A,B)$. For $A,B \in S-\A_0$ consider an $f\in \Mor_{S-\A}(A,B)$. Both $A,B$ being objects of $S-\A_0$ have their respective unique zeros $\theta_A, \theta_B$. Let $f(\theta_A) = b\in B$. Then 
\[
f(\theta_A ) = f(0\theta_A ) = 0f(\theta_A )=\theta_B = 0\theta_B
\]
so $f$ preserves zero and as a consequence $f\in \Mor_{S-\A_0}(A,B)$. The reverse inclusion of morphism sets is clear.
		
		Let now be $A\in S-\A$, $X\in S-\A_0$ and $f: A \to X$ a morphism in $S-\A$. We claim that $\mF(f)$ is the unique morphism in $\Mor_{S-\A_0}(\mF(A), X)$ that makes the following square commute:

		{
			\centering
			
			\begin{tikzcd}
				A \arrow[r, "\pi_{0A}", twoheadrightarrow] \arrow[d, "f", , labels = left]
				& \mF(A) \arrow[d, "\mF(f)"] \\
				X \arrow[r, equal]
				& \mF(X) = X
			\end{tikzcd}
			
		}	 
		
Indeed, if  $\beta :\mF(A)\to X$ satisfies $\beta \pi_{0A} = \mF(f)\pi_{0A}$, then $\beta = \mF(f)$, since $\pi_{0A}$ is surjective. 
\end{proof}

\begin{example}\label{exm_F_is_not} \rm
(1) Let $S$ be an arbitrary non-trivial monoid and consider $A_1=\{\theta\}$ and $A_1=A_1\coprod A_1=\{\theta_1, \theta_2\}$
are two acts in $S-\A$. Then $\mF(A_1)=\mF(A_2)$.

Note that the functor $\mF$ is not faithful since $|\Hom(A_1,A_2)| = 2$, while by applying $\mF$, we get $|\Hom(\mF(A_1), \mF(A_2))| = |\Hom(A_1,A_1)| = 1$.

	(2) The functor $\mF$ is not left-exact (i.e. it does not preserve finite limits): consider the monoid $S = (\Z_2,\cdot,1)$ and the $S$-act $A = \{\theta_A, a\}$ with Cayley graph (omitting unit loops) 
		
		{
			\centering
			
			\begin{tikzcd}
				a \arrow[r, "0"] & \theta_A.
			\end{tikzcd}
			
		}
	Put $B = A \dot{\cup} \theta_S$, an object of $S-\A$ with two zeros. Then $\mF(B\prod B)$ has 6 elements, while $\mF(B)\prod \mF(B) = A \prod A$ is a 4-element act.
\end{example}

The previous examples show that $\mF(A)$ cannot be considered in a reasonable way an analogy of localization or completion of $A$.

\begin{lemma}\label{pres_coprod}
	The functor $\mF$ preserves coproducts.
\end{lemma}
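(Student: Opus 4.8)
The quickest route uses the preceding Proposition. Since $S-\A_0$ is a reflective subcategory of $S-\A$ with reflector $\mF$, the functor $\mF$ is a left adjoint to the inclusion $S-\A_0 \hookrightarrow S-\A$. Left adjoints preserve all colimits, and coproducts in particular; hence for every (nonempty) family $(A_i)_{i\in I}$ of objects of $S-\A$ one obtains a canonical isomorphism $\mF\bigl(\coprod_{i\in I} A_i\bigr) \cong \coprod_{i\in I} \mF(A_i)$, where the left-hand coproduct is formed in $S-\A$ and the right-hand one in $S-\A_0$. This already settles the lemma, so my plan is essentially to invoke this adjunction.

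To pin down the isomorphism concretely — and to make the argument self-contained — I would next compute both sides using the coproduct descriptions recalled in the Preliminaries. On the source side the coproduct in $S-\A$ is the disjoint union $\dot{\bigcup}_{i} A_i$, whose set of zeros is $0\bigl(\dot{\bigcup}_i A_i\bigr) = \dot{\bigcup}_i 0A_i$, because the $S$-action is performed componentwise. Applying $\mF$ therefore glues all the zeros of all the $A_i$ to a single element $\theta$, leaving the non-zero elements untouched. On the target side, each $\mF(A_i) = A_i/0A_i$ has a single zero $\theta_i$, and the $S-\A_0$-coproduct $\{(x_i) \mid x_i = \theta_i \text{ for all but at most one } i\}$ is, up to relabelling, exactly the disjoint union of the sets $A_i\setminus 0A_i$ together with one shared zero. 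Matching non-zero elements by the identity and identifying the two shared zeros yields a bijection, and one verifies directly that it is an $S$-homomorphism respecting zeros in both directions.

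The only genuine points to watch are bookkeeping rather than substance. First, the two concrete models must be shown to carry the same $S$-action: an element $sa$ that lands in some $0A_i$ on the source side has to correspond to the shared zero on the target side, which is immediate from the definition of the Rees factor once the identification $0\bigl(\dot{\bigcup}_i A_i\bigr) = \dot{\bigcup}_i 0A_i$ is made explicit. Second, one should be mindful of the empty family: $S-\A$ has no initial object (acts are nonempty sets), so coproducts there are taken over nonempty index sets, whereas $S-\A_0$ does possess the initial object $\theta$; accordingly the statement is read for nonempty families and no claim about the empty coproduct is needed. I expect the componentwise-action verification to be the most error-prone step, but it is entirely routine.
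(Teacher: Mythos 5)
Your proposal is correct and its core argument — that $\mF$, being a reflector, is left adjoint to the inclusion $S-\A_0 \hookrightarrow S-\A$ and therefore preserves all colimits — is exactly the paper's (one-line) proof. The additional explicit computation identifying both sides as the disjoint union of the $A_i\setminus 0A_i$ with a single shared zero is a sound, if optional, concretization of the same isomorphism.
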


\begin{proof}
	 Since $\mF$ is a reflector, hence a left adjoint (of the embedding functor $S-\A_0 \hookrightarrow S-\A$), it preserves colimits by the dual assertion of \cite[Theorem 1, page 114]{MacL}.
\end{proof}

Recall that an act $P$ is projective, if for any pair of acts $A, B $,
a homomorphism  $\alpha: P \rightarrow B$ and an epimorphism
$\pi:A\rightarrow B$, there exists a morphism $\overline{\alpha}: P \rightarrow A$ in $\mathcal{C}$ such that $\alpha = \pi\overline{\alpha}$.

\begin{lemma}\label{PreserveProj}
	Let $P\in S-\A$ be projective. Then $\mF(P)$ is projective in $S-\A_0$.
\end{lemma}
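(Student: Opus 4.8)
The plan is to exploit the adjunction between $\mF$ and the embedding functor $E : S-\A_0 \hookrightarrow S-\A$ established in the preceding Proposition, since left adjoints preserve projectivity when the right adjoint preserves epimorphisms. Concretely, suppose $P \in S-\A$ is projective and we are given a diagram in $S-\A_0$ consisting of $\alpha : \mF(P) \to B$ together with an epimorphism $\pi : A \twoheadrightarrow B$. Applying the embedding $E$ we regard $\pi$ as a morphism in $S-\A$. First I would compose with the reflection unit $\pi_{0P} : P \to \mF(P)$ to obtain the morphism $\alpha \circ \pi_{0P} : P \to B$ in $S-\A$.

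The next step is to lift this composite through $\pi$ using projectivity of $P$ in $S-\A$. For this I need $\pi$ to remain an epimorphism after applying $E$; this is immediate here because $\pi$ is surjective on underlying sets (the natural notion of epimorphism in both act categories is surjectivity, cf.\ the coproduct descriptions in the Preliminaries), and surjectivity is not affected by forgetting the zero-preservation constraint. Thus $\pi : A \to B$ is an epimorphism in $S-\A$ as well, and projectivity of $P$ yields a morphism $\gamma : P \to A$ in $S-\A$ with $\pi \gamma = \alpha\, \pi_{0P}$.

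Finally I would push $\gamma$ back down through the reflection. Since $A$ lies in $S-\A_0$, the universal property of the reflector (exactly the commuting-square statement proved in the Proposition) guarantees a unique morphism $\overline{\gamma} : \mF(P) \to A$ in $S-\A_0$ with $\overline{\gamma}\, \pi_{0P} = \gamma$. It then remains to verify $\pi \overline{\gamma} = \alpha$. Both sides are morphisms $\mF(P) \to B$ in $S-\A_0$, and precomposing with the epimorphism $\pi_{0P}$ gives $\pi \overline{\gamma}\, \pi_{0P} = \pi \gamma = \alpha\, \pi_{0P}$; since $\pi_{0P}$ is surjective, cancellation forces $\pi \overline{\gamma} = \alpha$, which is precisely the required lifting and establishes projectivity of $\mF(P)$.

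The main obstacle I anticipate is purely the bookkeeping of distinguishing the two categories: one must be careful that $\pi$ genuinely survives as an epimorphism in $S-\A$ and that $B$, as an object of $S-\A_0$, is treated as its own image under the full embedding so that the reflection's universal property applies verbatim. Once the epimorphism-preservation of the embedding is confirmed, the argument is a formal consequence of the adjunction and requires no computation with individual elements beyond the surjectivity cancellation. An alternative, more hands-on route would bypass the abstract adjunction by taking any epimorphism $\pi : A \twoheadrightarrow \mF(P)$ in $S-\A_0$, pulling it back to an epimorphism onto $P$ in $S-\A$, splitting via projectivity of $P$, and projecting the splitting through $\mF$ using Lemma~\ref{pres_coprod}; I would prefer the adjunction argument above as it is cleaner and avoids ad hoc constructions.
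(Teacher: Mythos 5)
Your argument is correct and matches the paper's proof essentially step for step: lift $\alpha\,\pi_{0P}$ through $\pi$ using projectivity of $P$ in $S-\A$, factor the resulting map through $\pi_{0P}$ (you via the reflection's universal property, the paper via the kernel inclusion $\ker\pi_{0P}\subseteq\ker\gamma$, which is the same fact), and cancel the surjection $\pi_{0P}$. The only difference is your explicit remark that $\pi$ stays an epimorphism in $S-\A$, which the paper leaves implicit.
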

\begin{proof}
Let the projective situation in $S-\A_0$ be given:

{
	\centering
	
	\begin{tikzcd}[row sep=tiny]
		 \mF(P) \arrow[dr,"f"] \\
	 &	A 		\\
		B \arrow[ur, "\pi", twoheadrightarrow, labels=below  right ]  
	\end{tikzcd}

}

Since $S-\A_0$ is a subcategory of $S-\A$ and we have $\pi_{0P}: P \twoheadrightarrow \mF(P)$, the projectivity of $P$ provides a morphism $\alpha: P \to B$ in $S-\A$ such that $\pi\alpha = f\pi_{0P}$; furthermore, $\ker \pi_{0P} \subseteq \ker \alpha$, hence $\alpha$ factorizes through $ \pi_{0P}$ via some $\alpha^{\prime}: \mF(P) \to B$:

{
	\centering
	
	\begin{tikzcd}[row sep=tiny]
		P \arrow[dd, "\alpha", rightsquigarrow, labels = left] \arrow[r, "\pi_{0P}"] & \mF(P) \arrow[dr,"f"]  \arrow[to = 3-1,"\alpha^{\prime}",rightsquigarrow]\\
		& &	A 		\\
		B \arrow[r, "\pi_{0B} = id", labels=below ] & B \arrow[ur, "\pi", twoheadrightarrow, labels=below  right ]  
	\end{tikzcd}

}

In total: $\pi\alpha^{\prime}\pi_{0P} = \pi\alpha = f\pi_{0P}$ and since $\pi_{0P}$ is an epimorphism, we get $\pi\alpha^{\prime} = f$.
\end{proof}

Let as observe that the description of projectivity in $S-\A_0$  works similarly as in $S-\A$ \cite[Theorem III.17.8]{KKM}.

\begin{lemma}\label{indec-proj} For an indecomposable projective act $A$ in 
$S-\A_0$ there exists an idempotent $e\in S$ such that $A\cong Se$.
\end{lemma}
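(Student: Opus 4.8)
The plan is to transcribe the classical characterization of indecomposable projectives in $S-\A$ \cite[Theorem III.17.8]{KKM} to the pointed category $S-\A_0$, the only genuine difference being that coproducts in $S-\A_0$ glue the zeros rather than taking disjoint unions. First I would record that ${}_SS$ is the free object on one generator in $S-\A_0$: a morphism $f\colon S\to X$ is determined by $f(1)$, and any value is admissible since zero-preservation is automatic ($f(0)=f(0\cdot 1)=0\,f(1)=\theta_X$), so $\Mor_{S-\A_0}(S,X)\cong X$ as sets. Hence the free acts are precisely the coproducts $\coprod_{i\in I}S$, which by the coproduct description (ii) is the internal wedge of copies $S^{(i)}\cong{}_SS$ meeting pairwise only in the common zero. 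Then, by the standard argument, a projective $A$ is a retract of a free act: the counit $\varepsilon\colon\coprod_{a\in A}S\to A$ sending the generator of the $a$-th copy to $a$ is surjective, hence an epimorphism, and projectivity splits it. This realizes $A$, up to isomorphism, as a retract subact $A'\subseteq F:=\coprod_{i}S$ with a retraction $r\colon F\to A'$ satisfying $r|_{A'}=\id_{A'}$.

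Next I would use indecomposability to push $A'$ into a single component. Every nonzero element of $F$ has exactly one nonzero coordinate, so it lies in a unique component $S^{(i)}$; consequently the subacts $A_i:=A'\cap S^{(i)}$ satisfy $A'=\bigcup_i A_i$ and $A_i\cap A_j=\{\theta\}$ for $i\neq j$, and the internal wedge identifies $A'\cong\coprod_i A_i$ in $S-\A_0$. Indecomposability of $A\cong A'$ then forces all but one $A_i$ to be the zero act, say $A'=A_{i_0}\subseteq S^{(i_0)}$. Since $r$ takes values in $A'\subseteq S^{(i_0)}$ and fixes $A'$ pointwise, its restriction $r|_{S^{(i_0)}}\colon S^{(i_0)}\to A'$ is a retraction of $S^{(i_0)}\cong{}_SS$ onto $A'$.

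Finally I would identify the retracts of ${}_SS$. Composing $r|_{S^{(i_0)}}$ with the inclusion $A'\hookrightarrow S^{(i_0)}$ yields an idempotent endomorphism of $S^{(i_0)}\cong{}_SS$ with image $A'$; and since every endomorphism of ${}_SS$ is a right multiplication $x\mapsto x\,f(1)$, idempotency is equivalent to $e:=f(1)$ being an idempotent of $S$, whose image is $\im=Se$. Therefore $A\cong A'\cong Se$, as required. (If one does not require $A\neq\theta$ in the definition of indecomposable, the degenerate case $A\cong\theta=S\cdot 0$ is covered by the idempotent $e=0$.)

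The main obstacle is the middle step: one must carefully verify that a subact of a coproduct in $S-\A_0$ splits as the wedge of its intersections with the components — this is exactly where the pointed (zero-gluing) structure enters and diverges from $S-\A$ — and then invoke indecomposability in the precise sense that $A$ is not a nontrivial coproduct in $S-\A_0$. The surrounding ingredients (free objects are coproducts of ${}_SS$, projective equals retract of free, and retracts of ${}_SS$ are the acts $Se$) are routine transcriptions of the act-theoretic argument and should present no difficulty.
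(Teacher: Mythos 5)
Your proof is correct and follows essentially the same route as the paper: the paper simply cites \cite[Lemma 4.4]{DZ21} for the existence of a retraction $p\colon S\to A$ and coretraction $i\colon A\to S$ and then computes that $e=ip(1)$ is idempotent with $A\cong Se$, whereas you derive that retraction/coretraction pair from scratch (free presentation, splitting by projectivity, and locating the indecomposable retract inside a single wedge component) before performing the same final idempotent computation. Your careful handling of how a subact of a coproduct in $S-\A_0$ splits as the wedge of its intersections with the components is exactly the content delegated to the cited lemma, so nothing is missing.
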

\begin{proof} 
 We follow the arguments of the proof of \cite[Proposition III.17.7]{KKM}.

By \cite[Lemma 4.4]{DZ21} there exist a retraction $p: S\to A$
and a coretraction $i:A\to S$ such that $pi=\id_A$.
If we put $e=ip(1)$ it is easy to
see that $e=ip(1)=ip(e)=e^2$ and $A\cong i(A) = Se$.
\end{proof}

\begin{proposition}\label{proj} An act $A$ is projective in
$S-\A_0$ if and only if there exist idempotents $e_i$, $i\in I$
such that $A=\coprod_{i\in I} Se_i$.
\end{proposition}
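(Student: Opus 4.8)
The plan is to prove the two implications separately, invoking Lemma~\ref{indec-proj} for the forward direction and building up a retract-plus-coproduct argument for the converse. For the \emph{if} direction, I would first observe that $S=S\cdot 1$ is itself projective in $S-\A_0$: it is the free pointed act on one generator, so any morphism $S\to B$ is determined by the image of $1$, and a lift along a surjection is obtained simply by choosing a preimage of that image (here $0\ne 1$ guarantees $S$ is genuinely free). Next, for an idempotent $e$ the maps $p\colon S\to Se$, $s\mapsto se$, and the inclusion $i\colon Se\hookrightarrow S$ are morphisms of $S-\A_0$ with $pi=\id_{Se}$, where $e^2=e$ is used; hence $Se$ is a retract of $S$ and therefore projective, since a retract of a projective object is projective. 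Finally I would verify that a coproduct of projectives is projective directly from the universal property of $\coprod$ recalled in the preliminaries: given $f\colon\coprod_i Se_i\to B$ and a surjection $\pi\colon C\to B$, I lift each restriction $f_i\colon Se_i\to B$ to some $\bar f_i\colon Se_i\to C$ and assemble the family $(\bar f_i)$ into the required lift $\bar f$. This yields projectivity of $A=\coprod_{i\in I}Se_i$.

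For the \emph{only if} direction the central step is to decompose an arbitrary projective $A\in S-\A_0$ into indecomposable summands. I would define a connectedness relation on the nonzero elements $A\setminus\{\theta\}$, declaring $a$ and $b$ related when they are joined by a chain of nonzero elements in which consecutive terms differ by the action of an element of $S$ (that is, $c_{i+1}=sc_i$ or $c_i=sc_{i+1}$ for some $s\in S$), and I let $A_j=C_j\cup\{\theta\}$ as $C_j$ ranges over the equivalence classes. Each $A_j$ is then a subact containing the zero: if $sa\ne\theta$ the element $sa$ lies in the same class as $a$, while if $sa=\theta$ it lands in $A_j$ anyway. Each $A_j$ is indecomposable, and, matching the description of $\coprod$ in $S-\A_0$, the nonzero elements being distributed into disjoint classes with a single shared zero gives $A\cong\coprod_j A_j$.

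It then remains to note that each component $A_j$ is a retract of $A$: the map fixing $A_j$ pointwise and sending every other component to $\theta$ is a well-defined morphism of $S-\A_0$, because each $A_k$ is a subact and $A_k\cap A_j=\{\theta\}$ for $k\ne j$, so the action cannot carry an element out of its component into $A_j$. As a retract of the projective object $A$, each $A_j$ is projective, and being indecomposable it satisfies $A_j\cong Se_j$ for some idempotent $e_j$ by Lemma~\ref{indec-proj}; hence $A\cong\coprod_{j}Se_j$. The step I expect to be the main obstacle is precisely this component decomposition — in particular checking that the proposed projection onto a single component respects both the $S$-action and the zero, and that the resulting family genuinely forms a coproduct \emph{in $S-\A_0$} rather than merely in $S-\A$, where the distinguished zero plays no special role.
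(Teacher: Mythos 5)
Your proof is correct, and the \emph{if} direction coincides with the paper's: both establish that $Se$ is a retract of the projective act $S$ via $p(s)=se$ and the inclusion, and that coproducts of projectives are projective. Where you genuinely diverge is the \emph{only if} direction. The paper simply cites \cite[Theorem 4.3]{DZ21}, which states that a projective act in $S-\A_0$ is a coproduct of indecomposable projective acts, and then applies Lemma~\ref{indec-proj}; you instead reprove the decomposition from scratch by partitioning $A\setminus\{\theta\}$ into connected components under the ``differ by the $S$-action'' chain relation, observing that each $C_j\cup\{\theta\}$ is an indecomposable subact, that these wedge together to the coproduct described in the preliminaries, and that each component is a retract of $A$ (hence projective) via the projection killing the other components. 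Your worries about the delicate steps are unfounded: the projection $r_j$ is a morphism precisely because each component is a subact meeting $A_j$ only in $\theta$, and indecomposability of $A_j$ follows since a splitting $A_j=B_1\coprod B_2$ with nonzero elements on both sides would force some chain to cross between $B_1$ and $B_2$ in a single $S$-step, contradicting that each $B_i$ is a subact. What your route buys is self-containedness — the decomposition argument works for an arbitrary act of $S-\A_0$ (it is the $S-\A_0$ analogue of \cite[Theorem I.5.10]{KKM}) and projectivity enters only through the retract step — at the cost of length; the paper's route is shorter but leans on an external result. The only point worth adding explicitly is the degenerate case $A=\{\theta\}$, which is covered by taking $I=\emptyset$.
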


\begin{proof} We follow the arguments of the proof of \cite[Theorem III.17.8]{KKM}.

By \cite[Theorem 4.3]{DZ21}, $A$ is an projective act if and only if it
is isomorphic to a direct sum of indecomposable projective acts.
 Since every indecomposable projective act
is isomorphic to $Se$ for some idempotent $e$ by Lemma~\ref{indec-proj}, it remains to observe that for each act $Se$, where $e$ is an idempotent, 
the inclusion morphism $i: Se\to S$ forms a coretraction and 
the projection $p: S\to Se$ given by the rule $p(s)=se$ forms a retraction and since $S$ is projective, $Se$ is projective, too.
\end{proof}

Now, we show that locally cyclic acts contains only one zero-element.

\begin{lemma}\label{unique_zero}
Any cyclic $S$-act $A$ contains a unique zero element $\theta_A$.
\end{lemma}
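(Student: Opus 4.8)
The plan is to argue directly from the definition of a cyclic act together with the behaviour of the monoid zero. First I would fix a generator and write $A=Sa$ for some $a\in A$; note that $A$ is nonempty, since $a=1\cdot a\in A$. Recall from the preliminaries that the zero elements of $A$ are precisely the elements of the set $0A=\{0x\mid x\in A\}$: indeed, each $0x$ satisfies $s(0x)=(s0)x=0x$ for every $s\in S$, so it is a zero element, and conversely any zero element $z$ equals $0z\in 0A$.

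The key computation is then to evaluate $0A$ on the generator. Every element of $A$ has the form $sa$ for some $s\in S$, so for an arbitrary element $0(sa)\in 0A$ I compute, using associativity of the action and the defining identity $0s=0$ of the monoid zero,
\[
0(sa)=(0s)a=0a .
\]
Hence every element of $0A$ coincides with $0a$, that is, $0A=\{0a\}$ is a singleton.

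This establishes both existence and uniqueness at once: the single element $\theta_A:=0a$ is a zero element of $A$ (it lies in $0A$), and it is the only one, since every zero element lies in $0A=\{0a\}$. I do not expect any genuine obstacle here; the only point requiring care is the identification of the zero elements of $A$ with the set $0A$, which rests on the monoid identities $0s=s0=0$ recalled in the preliminaries, and the reduction $0(sa)=0a$ that collapses the whole set $0A$ to one point once $A$ is cyclic.
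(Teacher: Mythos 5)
Your proof is correct and follows essentially the same route as the paper's: both rest on the single computation $0(sa)=(0s)a=0a$, which shows every zero element of a cyclic act $A=Sa$ collapses to $0a$. Your extra remark identifying the zero elements of $A$ with the set $0A$ (via $z=0z$) is a harmless elaboration of what the paper leaves implicit.
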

\begin{proof} 
Since the act $A$ is cyclic, there exists a $g\in G$ for which $A= Sg$. Let $\theta$ be a zero element of $A$. Then there exists an $s\in S$ such that
$\theta=s\cdot a$, and so 
$\theta=0\theta =0sa=0a$. Thus $0A=\{\theta\}$.
\end{proof}

Recall that an $S$-act is locally cyclic, if for any pair of elements $a_1,a_2 \in A$ there exists a $b \in A$ with $a_i \in Sb$ for $i=1,2$. 

\begin{corollary}\label{im_cyclic}
If $A$ is a  locally cyclic $S$-act, then it contains a unique zero element $\theta_A$, the morphism $\pi_{0A}$ is bijective, and we can assume $\mF(A) = A$.
\end{corollary}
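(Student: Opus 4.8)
The plan is to reduce the uniqueness assertion to the cyclic case already handled in Lemma~\ref{unique_zero}, after which the remaining two claims follow formally.

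First I would record that the set $0A$ of zero elements is nonempty: picking any $a\in A$ (the act is nonempty by convention), the element $0a$ satisfies $s(0a)=(s0)a=0a$ for every $s\in S$, so $0a$ is indeed a zero element of $A$.

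The central step is to show $|0A|=1$. Suppose $\theta_1,\theta_2\in 0A$. By local cyclicity there is $b\in A$ with $\theta_1,\theta_2\in Sb$. Since each $\theta_i$ is fixed by the whole monoid, it is in particular a zero element of the \emph{cyclic} subact $Sb$; Lemma~\ref{unique_zero} then forces $\theta_1=\theta_2$. Hence $0A=\{\theta_A\}$ is a singleton, which is the first assertion. Because $0A$ consists of a single element, the Rees congruence $\rho_{0A}$ identifies $a_1$ and $a_2$ only when $a_1=a_2$, i.e.\ it is the identity congruence; thus $\pi_{0A}\colon A\to A/0A=\mF(A)$ is a bijective $S$-homomorphism, hence an isomorphism, and we may identify $\mF(A)$ with $A$.

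The only point requiring care — and it is slight — is the observation that a zero element of $A$ lying in $Sb$ is automatically a zero element of the subact $Sb$, which is what licenses the application of Lemma~\ref{unique_zero}; this is immediate from the definition, so I expect no genuine obstacle. Everything else is bookkeeping around the trivial Rees congruence induced by a one-element $0A$.
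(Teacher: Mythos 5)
Your proof is correct and follows the route the paper intends: the corollary is stated without proof precisely because the reduction to Lemma~\ref{unique_zero} via local cyclicity (any two zeros lie in a common cyclic subact $Sb$, which has a unique zero) is the expected argument, and the bijectivity of $\pi_{0A}$ then follows from the triviality of the Rees congruence for a one-element $0A$. No gaps.
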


For any act $A\in S-\A$ we can consider the one-element $S$-act $_S\theta$ being adjoined, $A\, \dot{\cup}\, _S\theta \simeq A \coprod _S\theta$.  Therefore define a property $\mathcal{P}$ of an $S$-act $A\in S-\A$ to hold \textit{\termin} in the case $A \simeq A^{\prime}\, \dot{\cup}\, \dot{\bigcup}_{i\in I}\,_S\theta$, $A^{\prime}$ cannot be decomposed as $A^{\prime\prime} \,\dot{\cup}\, _S\theta$ and it has the property $\mathcal{P}$. Call then $A^{\prime}$ the \emph{substantial summand} of $A$. Finally, a subact $B$ of $A$ is said to be \emph{superfluous} if $B \cup C \neq A$ for each proper subact $C$ of $A$.

Note that in $S-\A_0$ the adjunction of $_S\theta$ is trivial, since $A \coprod _S\theta \simeq A$, and let us list now some elementary properties of zero elements and substantial summands.

\begin{lemma}\label{substantial} Let $A\in S-\A$. 
\begin{enumerate}
\item If $\emptyset \ne C\subseteq 0A$, then 
$C=\dot{\bigcup}_{c\in C}\{c\}\cong \coprod_{c\in C}\theta$ is a subact of $A$.
\item If $B$ is a subact of $A$ satisfying $A=B\cup 0A$,
then $A\cong B\coprod(0A\setminus B)\cong B\coprod(\coprod_{c\in 0A\setminus B}\theta)$.
\item $A$ contains a substantial summand.
\item If $A$ is indecomposable, then it is the substantial summand of itself and $0A$ is a superfluous subact of $A$.
\end{enumerate}
\end{lemma}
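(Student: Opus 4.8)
The plan is to prove the four items in order, using repeatedly that in $S$-$\oA$ the coproduct is the disjoint union (Preliminaries (i)) and that every $z\in 0A$ is a fixed point: $sz=s(0a)=(s0)a=0a=z$ for all $s\in S$.

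For (1) I would note that each $z\in 0A$ satisfies $sz=z$, so every singleton $\{z\}\cong\theta$ is a subact; a union of subacts is again a subact, and a set of pairwise distinct fixed points is, as an $S$-act, exactly the disjoint union of the corresponding one-point acts, which gives $C=\dot\bigcup_{c\in C}\{c\}\cong\coprod_{c\in C}\theta$. For (2), from $A=B\cup 0A$ I get $A\setminus B=0A\setminus B$, a subact by (1); since $B$ and $0A\setminus B$ are disjoint subacts with union $A$ and the coproduct in $S$-$\oA$ is the disjoint union, $A\cong B\coprod(0A\setminus B)$, and (1) rewrites the second summand as $\coprod_{c\in 0A\setminus B}\theta$.

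For (3) I would call a zero $z\in 0A$ \emph{detachable} when $A\setminus\{z\}$ is a subact, equivalently when no element other than $z$ is sent to $z$, and let $D$ be the set of all detachable zeros. The first key step is that $A':=A\setminus D$ is a subact: if $a\notin D$ but $sa=z\in D$ for some $s$, then $a\neq z$ lies in the subact $A\setminus\{z\}$, forcing $sa\neq z$, a contradiction. Since $D\subseteq 0A$ and $A=A'\cup 0A$ with $0A\setminus A'=D$, item (2) applied to $B=A'$ yields $A\cong A'\coprod\coprod_{z\in D}\theta$. The second key step is that $A'$ has no detachable zero: a zero $w$ detachable inside $A'$ is not hit by any element of $A'\setminus\{w\}$, while the elements of $D$ are fixed points different from $w$ and so do not hit $w$ either; hence $w$ would be detachable in $A$, i.e. $w\in D$, contradicting $w\in A'$. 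Thus $A'$ is a substantial summand. The degenerate case $A=0A$ (no non-zero element) I would treat separately: there $A\cong\coprod_{z\in 0A}\theta$ and one retains a single copy of $\theta$ as substantial summand.

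For (4), if $A$ is indecomposable then it admits no detachable zero, for otherwise $A\cong(A\setminus\{z\})\coprod\theta$ would be a nontrivial decomposition; hence $D=\emptyset$ and $A=A'$ is its own substantial summand. For the superfluousness of $0A$, suppose $C$ is a proper subact with $0A\cup C=A$; then (2) gives $A\cong C\coprod(0A\setminus C)$, and indecomposability forces $0A\setminus C=\emptyset$, i.e. $0A\subseteq C$ and $A=C$, contradicting that $C$ is proper. I expect the main obstacle to be item (3): isolating the correct notion of detachable zero, proving that $A\setminus D$ is genuinely a subact (the one place where the fixed-point property of zeros is used in an essential way), verifying that no detachable zeros survive in $A'$, and handling the degenerate all-zeros act.
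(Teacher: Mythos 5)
Your items (1), (2) and (4) follow essentially the same lines as the paper's proof: (1) is the fixed-point observation $sc=s(0a)=0a=c$, (2) reduces to (1) via the disjoint-union description of coproducts in $S$-$\oA$, and the superfluousness argument in (4) is verbatim the paper's (apply (2) to a proper $C$ with $0A\cup C=A$ and use indecomposability to force $0A\setminus C=\emptyset$). Item (3) is where you genuinely diverge. The paper invokes the unique decomposition $A=\dot{\bigcup}_{i\in I}A_i$ into indecomposable subacts (\cite[Theorem I.5.10]{KKM}) and takes $B=\dot{\bigcup}\{A_i\mid A_i\nsubseteq 0A\}$; the uniqueness of that decomposition then immediately justifies speaking of \emph{the} substantial summand. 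You instead build the summand by hand, peeling off the ``detachable'' zeros $z$ (those with $A\setminus\{z\}$ a subact, equivalently those not hit by any other element), and your two key verifications --- that $A\setminus D$ is a subact, and that no detachable zero survives in $A\setminus D$ --- are both correct; the equivalence between ``detachable zero'' and ``splits off a copy of $\theta$'' is exactly right. Your route is more elementary (no appeal to the decomposition theorem) and, to your credit, treats the degenerate all-zeros act explicitly, which the paper glosses over; what it does not deliver is the uniqueness of the substantial summand, which the paper's route gets for free and which is implicitly used when the definition calls $A'$ \emph{the} substantial summand. Since item (3) only asserts existence, this is a loss of a by-product rather than a gap.
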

\begin{proof}
 (1) It is clear as $Sc=c=0c$ for all $c\in 0A$.

(2) Since $A=B\dot{\cup}(0A\setminus B)$ and $(0A\setminus B)\subseteq 0A$, the claim follows from (1).

(3) By \cite[Theorem I.5.10]{KKM} there exists, up to a permutation, 
a unique decomposition $A=\dot{\bigcup}_{i\in I}A_i$ of $A$ into indecomposable subacts. If we put $B=\dot{\bigcup}\{A_i\mid A_i\nsubseteq 0A\}$ and $C=\dot{\bigcup}\{A_i\mid A_i\subseteq 0A\}$,
then $A=B\dot{\cup}C\cong B\coprod(\coprod_{c\in C}\theta)$ by (1) and (2), hence $B$ is the substantial summand of $A$ by the uniqueness of the decomposition.

(4) If $B$ is a subact of an indecomposable act $A$ such that 
$B\cup 0A=A$, then $A\cong B\coprod (\coprod_{\theta\in 0A\setminus B}\theta)$ by (2), hence we have $0A\subseteq B$ and $B=A$.
\end{proof}

\begin{lemma}\label{preim_cyclic}
	If $A\in S-\oA$ such that $\mF(A)$ is a nonzero cyclic $S$-act, then $A$ is, \termin, cyclic.
\end{lemma}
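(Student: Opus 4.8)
The plan is to locate a generator of $\mF(A)$ directly inside $A$ and to show it already generates everything except some superfluous zeros. Write $\mF(A)=A/0A$. Since $\mF(A)$ is nonzero cyclic, its generator cannot be the collapsed zero, so there is an $a_0\in A\setminus 0A$ with $A/0A=S[a_0]=\{[sa_0]\mid s\in S\}$. The first step is to verify that every $a\in A\setminus 0A$ lies in $Sa_0$: from $[a]\in S[a_0]$ we get $[a]=[sa_0]$ for some $s\in S$, and since neither $a$ nor $sa_0$ belongs to $0A$ (both have class $\neq\theta$), the defining property of the Rees congruence $\rho_{0A}$ forces $a=sa_0$. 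Hence $A\setminus 0A\subseteq Sa_0$, and therefore $A=Sa_0\cup 0A$.

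The second step is to feed the subact $Sa_0$ into Lemma~\ref{substantial}(2), which yields the isomorphism $A\cong Sa_0\coprod\bigl(\coprod_{c\in 0A\setminus Sa_0}\theta\bigr)$. Now $Sa_0$ is cyclic, hence indecomposable, and it contains the nonzero element $a_0$, so it is not isomorphic to $\theta$ and cannot be written in the form $A''\,\dot{\cup}\,{}_S\theta$. By uniqueness of the decomposition into indecomposable subacts this identifies $Sa_0$ as the substantial summand of $A$, and the displayed isomorphism is precisely the form required for $A$ to be \termin\ cyclic.

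The only genuinely delicate point is the passage from $[a]=[sa_0]$ to $a=sa_0$ in the first step: this is where the argument relies on the fact that $\rho_{0A}$ collapses only the zero elements and leaves the non-zero part rigid, so one must check carefully that $a\notin 0A$ (guaranteed by $[a]\neq\theta$) and $sa_0\notin 0A$ (guaranteed by $[sa_0]=[a]\neq\theta$) before concluding equality of the two representatives. Everything else is routine bookkeeping: that $0A\setminus Sa_0$ is a disjoint union of one-element zero acts (Lemma~\ref{substantial}(1), invoked inside Lemma~\ref{substantial}(2)) and that a cyclic act is indecomposable and thus admits no $_S\theta$-summand.
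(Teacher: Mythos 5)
Your proposal is correct and follows essentially the same route as the paper: choose a generator of $\mF(A)$, lift it to $g\in A$ to obtain $A=Sg\cup 0A$ from the definition of the Rees factor, and then apply Lemma~\ref{substantial}(2) to split off the extra zeros as a coproduct of copies of $\theta$. Your first step merely spells out in detail what the paper compresses into ``by the definition of the Rees factor,'' namely that the Rees congruence is the identity off $0A$, so the two arguments coincide.
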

\begin{proof}
As $\mF(A)=\pi_{0A}(A)$ is cyclic, there exists $g\in A$ such that 
$\mF(A)=S\pi_{0A}(g)$, hence $A=Sg\cup 0A$ by the definition of the Rees factor. Since $A\cong Sg\coprod(\coprod_{c\in 0A\setminus Sg}\theta)$ by Lemma~\ref{substantial}(2), $A$ is, \termin, cyclic.
\end{proof}


Note that the image nor the preimage under $\mF$ of an indecomposable act may not be indecomposable, as the following examples illustrate:

\begin{example}\rm
	(1) Consider the monoid $S$ from Example~\ref{exm_F_is_not}(2)
	 and the $S$-act $A = \{\theta_A, a,b\}$ with Cayley graph (omitting unit loops)
	
	{	\centering
		
		\begin{tikzcd}
			a \arrow[r, "0"] & \theta{_A} & \arrow["0"', l]  b. 
		\end{tikzcd}
		
	}
	Then $A$ is indecomposable in $S-\oA$, but $\mF(A) = A$ is decomposable in $S-\A_0$.
	
	(2) For any indecomposable $A \in S-\A_0$ and a nonempty index set $I$, the act $B=A \, \dot{\bigcup}_{i\in I} \, ({\theta_i}) \in S-\oA$ is decomposable with $\mF(B) \simeq \mF(A)$ indecomposable.

\end{example}

Recall that projective objects of both categories $S-\oA$ and $S-\A_0$ are isomorphic to coproducts (in the respective category) $\coprod_{i\in I} Se_i$ of cyclic $S$-acts of the form $Se_i$ with $e_i\in S$ idempotents by \cite[Proposition 17.8]{KKM} and Proposition~\ref{proj}.

\begin{example} \rm
	The functor $\mF$ is not bijective on the class of projective objects of $S-\A$ for any monoid $S$, as there exists a non-projective $A\in S-\A$ with $\mF(A)$ projective: consider the coproduct $A = S_1 \dot{\cup} S_2$, where $S_i\cong S$. Then $\mF(S_1 \dot{\cup} S_2)$ is not projective in  $S-\A$  while
 $\mF(\mF(S_1 \dot{\cup} S_2))= \mF(S_1 \dot{\cup} S_2)$ is projective in $S-\A_0$ by Lemma~\ref{pres_coprod}. 
 In particular, $A=\{(a,b)\in\Z^2\mid a=0\lor b=0\}$ is not projective in $\Z-\A$ and $\mF(A)\cong A$ is projective in $\Z-\A_0$.	
\end{example}

\section{Perfect monoids}

Recall that for an act $A$, a pair $(C,f)$ is a \emph{cover} provided $f:C\to A$ is an epimorphism, and for any proper subact $C^{\prime} \subset P$ the restriction 
$f|_{C^{\prime}}: C^{\prime}\to A$ is not an epimorphism in the corresponding category. 
A cover $(P,f)$ is called \emph{projective} in case $P$ is 
projective (cf. \cite[chapter 17]{KKM}). Note that a projective cover is maximal among all covers.

\begin{lemma}\label{cover}
	Let $(P, f)$ be a projective cover of $A$ in the category $S-\A$. Then $(\mF(P), \mF(f))$ is a projective cover of $\mF(A)$ in the category $S-\A_0$.
\end{lemma}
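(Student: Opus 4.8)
The plan is to verify the three defining properties of a projective cover for the pair $(\mF(P),\mF(f))$ in $S-\A_0$: that $\mF(P)$ is projective, that $\mF(f)$ is an epimorphism, and the minimality condition on proper subacts. The first is immediate from Lemma~\ref{PreserveProj}. For the second, I would start from the commuting square $\mF(f)\pi_{0P}=\pi_{0A}f$ defining $\mF(f)$: since $f$ is surjective (epimorphisms in these categories being exactly the surjective homomorphisms) and $\pi_{0A}$ is surjective, the composite $\pi_{0A}f$ maps onto $\mF(A)$, whence $\mF(f)$ maps onto $\mF(A)$, i.e. it is an epimorphism.

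The substance is the minimality condition. I would first set up the order-preserving correspondence between subacts of $\mF(P)$ in $S-\A_0$ and those subacts of $P$ that contain $0P$: to a subact $Q\subseteq\mF(P)$ (which necessarily contains $\theta_{\mF(P)}$) associate $P':=\pi_{0P}^{-1}(Q)$, which is a subact of $P$ containing $\pi_{0P}^{-1}(\theta_{\mF(P)})=0P$ and satisfies $\pi_{0P}(P')=Q$ by surjectivity of $\pi_{0P}$. If $Q\subsetneq\mF(P)$ is proper, then it misses some non-zero class $[x]$ with $x\notin0P$, so $x\notin P'$ and $P'\subsetneq P$ is proper. The cover property of $(P,f)$ now yields that $f|_{P'}$ is not an epimorphism, i.e. $f(P')\subsetneq A$. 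Finally I would use the commuting square to compute $\mF(f)(Q)=\mF(f)(\pi_{0P}(P'))=\pi_{0A}(f(P'))$ and argue this is proper in $\mF(A)$.

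The step I expect to be the main obstacle is precisely this last one: a priori the Rees projection $\pi_{0A}$ glues all of $0A$ to a single point, so $f(P')\subsetneq A$ does not by itself force $\pi_{0A}(f(P'))\subsetneq\mF(A)$, since the missing elements could all be zeros. The key observation that resolves this is that $P'$ contains $0P$, and hence $f(P')\supseteq f(0P)=0\,f(P)=0A$; thus $f(P')$ already contains every zero of $A$, so any element $a\in A\setminus f(P')$ is necessarily non-zero. For such an $a$ the class $[a]=\pi_{0A}(a)$ is a singleton, so $[a]\in\pi_{0A}(f(P'))$ would force $a\in f(P')$, a contradiction; therefore $[a]\notin\mF(f)(Q)$ and $\mF(f)|_Q$ is not an epimorphism. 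This establishes minimality and identifies $(\mF(P),\mF(f))$ as a projective cover of $\mF(A)$. Throughout I rely on the identity $f(0P)=0A$, which holds because $f$ is surjective and compatible with the action of $0$.
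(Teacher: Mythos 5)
Your proposal is correct and follows essentially the same route as the paper: pull a proper subact $Q\subsetneq\mF(P)$ back to $\tilde{Q}=\pi_{0P}^{-1}(Q)$ with $0P\subseteq\tilde{Q}\subsetneq P$, apply the cover property of $(P,f)$, and use $0A=f(0P)\subseteq f(\tilde{Q})$ to conclude that properness survives the Rees projection $\pi_{0A}$. Your explicit justification of the last step (missing elements are non-zero, hence singleton classes) merely spells out what the paper leaves as ``it implies,'' and your verification that $\mF(f)$ is an epimorphism is a harmless addition the paper leaves implicit.
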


\begin{proof}
By Lemma~\ref{PreserveProj}, $\mF(P)$ is projective. Let $Q\subsetneq \mF(P)$ be a subact and put $\tilde{Q}=\pi_{0P}^{-1}(Q)$.
Then $0P\subseteq \tilde{Q}\subsetneq P$, hence $f(\tilde{Q})\ne A$ by the hypothesis and $0A=0f(P)=f(0P)\subseteq f(\tilde{Q})$, as $f$ is surjective. It implies that $\pi_{0A}(\tilde{Q})\ne \pi_{0P}(A)=\mF(A)$, thus
\[
\mF(f)(Q)=\mF(f)(\pi_{0P}(\tilde{Q}))= \pi_{0A}f(\tilde{Q}))\ne \mF(A).
\]
\end{proof}

In analogy with module categories, call a monoid \emph{left perfect} (\emph{left 0-perfect}) if each $A\in S-\oA$ ($A\in S-\A_0$) has a projective cover (cf. \cite{DZ21, I, K}). Let us recall a characterization of left-perfect monoids:

\begin{theorem}\cite[1.1]{I}\label{char_perf}
	A monoid $S$ is left-perfect if and only if each cyclic $S$-act has a projective cover and every locally cyclic $S$-act is cyclic.
\end{theorem}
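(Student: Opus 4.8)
The plan is to prove both implications directly from the definition of projective cover, using that projective objects of $S-\oA$ are coproducts $\coprod_i Se_i$ of cyclic projectives \cite[Proposition 17.8]{KKM} and that a projective cover is maximal among covers.

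For necessity I would argue as follows. If $S$ is left-perfect then every object of $S-\oA$, in particular every cyclic act, has a projective cover, which gives the first condition for free. For the second, let $A$ be locally cyclic, so that by Corollary~\ref{im_cyclic} it has a single zero $\theta_A$; take its projective cover $(P,f)$ and write $P=\dot{\bigcup}_{i\in I}Se_i$. After discarding the zero summands---superfluous, since $\theta_A$ already lies in the image of each nonzero summand---I may assume every $e_i\neq 0$. If $|I|\geq 2$, I would fix $i_0$, set $P'=\dot{\bigcup}_{i\neq i_0}Se_i$, and use minimality ($f(P')\neq A$) to pick $a_0\in A\setminus f(P')$, which then lies in $f(Se_{i_0})$. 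For each $i_1\neq i_0$ local cyclicity supplies $b$ with $a_0,f(e_{i_1})\in Sb$; writing $b\in f(Se_k)$ forces $k=i_0$ (otherwise $a_0\in f(P')$), whence $f(Se_{i_1})\subseteq f(Se_{i_0})$. This collapses $A=\bigcup_i f(Se_i)$ to the cyclic act $Sf(e_{i_0})$, as desired.

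Sufficiency is the substantial direction, and I would open it by recording that ``every locally cyclic act is cyclic'' is equivalent to the ascending chain condition on cyclic subacts of an arbitrary act: a strictly increasing chain of cyclic subacts has a locally cyclic non-cyclic union, while conversely a directed union of cyclic subacts under a.c.c.\ has a greatest, hence total, member. Given $A\in S-\oA$, a.c.c.\ lets every cyclic subact sit inside a maximal one, so $A=\bigcup_{\lambda}M_\lambda$ is the union of its maximal cyclic subacts; I would then extract an irredundant generating subfamily $\{M_\lambda\}_{\lambda\in\Lambda_0}$ (again using a.c.c.). Each $M_\lambda$ is cyclic, hence carries a projective cover $(P_\lambda,f_\lambda)$ by the first hypothesis, and I would assemble $P=\coprod_{\lambda\in\Lambda_0}P_\lambda$ (projective) with the induced epimorphism $f\colon P\to A$ coming from the $f_\lambda$ followed by the inclusions $M_\lambda\hookrightarrow A$.

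The hard part will be showing that $(P,f)$ is a \emph{cover}, i.e.\ that $f$ fails to be onto when restricted to any proper subact of $P$. This is exactly the obstacle met in Bass' theorem for modules, where left $T$-nilpotence of the Jacobson radical is what forces a possibly infinite coproduct of superfluous covers to stay superfluous; here the a.c.c.\ on cyclic subacts must play that role. Concretely, I would assume a proper subact $P'\subsetneq P$ with $f(P')=A$ and try to leverage the superfluity of each individual $f_\lambda$ together with the irredundancy of the $M_\lambda$ to build a strictly ascending chain of cyclic subacts of $A$, contradicting a.c.c. Pinning down how the failure of global superfluity propagates across the summands into such a chain---the analogue of the module-theoretic lifting of superfluous submodules---is where I expect essentially all of the effort to go.
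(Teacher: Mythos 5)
The paper itself quotes this theorem from Isbell \cite{I} without proof, so there is no in-paper argument to measure you against; judged on its own terms, your necessity direction is complete and correct (it is essentially the same collapsing argument the authors use for Proposition~\ref{0-perfect is perfect}), but the sufficiency direction has a genuine gap. You construct the candidate cover $f\colon P=\coprod_{\lambda}P_\lambda\to A$ and then explicitly decline to prove the one claim that makes it a cover --- that no proper subact of $P$ maps onto $A$ --- announcing instead that this is ``where essentially all of the effort'' will go. That step is the whole content of the implication, so as written the proof is incomplete.

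The useful news is that the difficulty you anticipate, an analogue of Bass' left $T$-nilpotence, does not arise: coproducts in $S-\oA$ are disjoint unions, so every subact $P'$ of $P=\dot{\bigcup}_{\lambda}P_\lambda$ decomposes componentwise as $P'=\dot{\bigcup}_{\lambda}(P'\cap P_\lambda)$; the phenomenon that forces $T$-nilpotence for modules (submodules of direct sums failing to split along the summands) simply cannot occur for acts. Concretely, take for $\{M_\lambda\}$ the family of \emph{all} distinct maximal cyclic subacts of $A$ (every cyclic subact lies under a maximal one by Zorn's lemma, since your hypothesis makes the union of a chain of cyclic subacts locally cyclic, hence cyclic), and fix a generator $m_\lambda$ of each $M_\lambda$. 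If $P'\subsetneq P$, then $P'\cap P_{\lambda_0}\subsetneq P_{\lambda_0}$ for some $\lambda_0$, so $f_{\lambda_0}(P'\cap P_{\lambda_0})$ is a proper subact of the cyclic act $M_{\lambda_0}$ and therefore misses $m_{\lambda_0}$; and $m_{\lambda_0}\notin M_\lambda$ for $\lambda\ne\lambda_0$, since $m_{\lambda_0}\in M_\lambda$ would give $M_{\lambda_0}=Sm_{\lambda_0}\subseteq M_\lambda$ and hence $M_{\lambda_0}=M_\lambda$ by maximality. Thus $m_{\lambda_0}\notin f(P')$ and $f(P')\ne A$. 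This also makes your ``irredundant subfamily'' extraction unnecessary. So the gap is real but fillable, and the missing idea is the componentwise structure of subacts of coproducts of acts, not a chain-condition argument in the style of Bass.
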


\begin{proposition}\label{0-perfect is perfect}
	If a monoid $S$ is left-0-perfect, then it is left perfect.
\end{proposition}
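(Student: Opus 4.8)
The plan is to deduce left perfectness from the characterization in Theorem~\ref{char_perf}, so that it suffices to verify its two hypotheses for a left $0$-perfect monoid $S$: that every cyclic $S$-act has a projective cover in $S-\oA$, and that every locally cyclic $S$-act is cyclic. The bridge to $0$-perfectness is Corollary~\ref{im_cyclic}: a locally cyclic act $A$ (in particular a cyclic one) has a single zero and satisfies $\mF(A)=A$, so it is literally an object of $S-\A_0$, and the hypothesis of $0$-perfectness furnishes a projective cover $(P,g)$ of $A$ in $S-\A_0$ directly, with $P\cong\coprod_{i\in I}Se_i$ by Proposition~\ref{proj}.

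For the condition that locally cyclic acts are cyclic, I would take such a cover $(P,g)$ and argue $|I|\le 1$. Discarding the trivial components (those with $Se_i=\{\theta\}$, which change nothing in the $S-\A_0$-coproduct), I may assume each $a_i:=g(e_i)$ is nonzero. Suppose $|I|\ge 2$ and pick $i_1\ne i_2$. Local cyclicity produces $b\in A$ with $a_{i_1},a_{i_2}\in Sb$, and $b\ne\theta$ since $a_{i_1}\in Sb$ is nonzero; writing $b=g(p)$ and noting that every nonzero element of the coproduct $P$ lies in a single summand $Se_{i_0}$, I get $b\in Sa_{i_0}$, hence $a_{i_1},a_{i_2}\in Sa_{i_0}$. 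Since $i_1\ne i_2$, at least one index, say $i_k$, differs from $i_0$; then $a_{i_k}\in Sa_{i_0}\subseteq g(\coprod_{j\ne i_k}Se_j)$, so the summand $Se_{i_k}$ may be deleted without destroying surjectivity of $g$, contradicting the minimality built into the definition of a cover. Therefore $P\cong Se$ is cyclic and $A=g(Se)=Sa$ is cyclic as well.

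The first condition then follows almost for free: a cyclic act is locally cyclic, so the argument above shows that its $S-\A_0$-projective cover is of the cyclic form $(Se,g)$, and it remains to transfer this cover back to $S-\oA$. Here the key observation is that any nonempty subact of $Se$ in $S-\oA$ contains $\theta=0\cdot x$, so the nonempty proper subacts of $Se$ are the same in $S-\oA$ and in $S-\A_0$; since $Se$ is projective in $S-\oA$ and $g$ remains a surjective epimorphism, minimality transfers verbatim and $(Se,g)$ is a projective cover of the cyclic act in $S-\oA$.

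I expect the main obstacle to be the careful bookkeeping of zero elements when passing between the two categories, whose coproducts glue zeros differently (contrast the two formulas in the preliminaries). Concretely, the delicate points are the degenerate cases in the $|I|\le 1$ argument — ruling out $b=\theta$, the summands with $g(e_i)=\theta$, and the vacuous summands $e_i=0$ — together with the verification that minimality of a cover is insensitive to the passage from $S-\A_0$ to $S-\oA$, which rests precisely on the fact that nonempty subacts automatically absorb the zero. Everything else becomes routine once Corollary~\ref{im_cyclic} has reduced the problem to genuine objects of $S-\A_0$.
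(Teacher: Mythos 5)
Your proposal is correct and follows essentially the same route as the paper: reduce to the two conditions of Theorem~\ref{char_perf}, use Corollary~\ref{im_cyclic} to view a locally cyclic act as an object of $S-\A_0$, take its $0$-projective cover, and use Proposition~\ref{proj} together with local cyclicity and cover minimality to force the cover to be a single $Se$, which then also serves as a projective cover in $S-\oA$. Your summand-deletion contradiction is just a rephrasing of the paper's argument that $f(P_1)\subseteq Sz\subseteq f(P_2)$ forces $A=f(P_2)$; the only point to tighten is that discarding summands with $Se_i=\{\theta\}$ does not by itself guarantee $g(e_i)\neq\theta$ (that needs one more appeal to minimality), which you already flag as a delicate point.
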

\begin{proof}
	Suppose that $S$ is left-0-perfect and let us prove the two conditions from Theorem~\ref{char_perf}.
	
First suppose that $A\in S-\oA$ is a locally cyclic act. Then
$A$ contains a unique zero $\theta_A$ and $A\cong\pi_{0A}(A)$ can be
considered an act of the category $S-\A_0$ by Corollary~\ref{im_cyclic}. 
Let $f:P\to A$
be a projective cover in $S-\A_0$. We show that $P$ is indecomposable.

Applying Proposition~\ref{proj} assume to the contrary that $P=P_1\coprod P_2$ is a non-trivial decomposition, where $P_1=Se$ is cyclic.
Since $f(P)=A=f(P_1)\cup f(P_2)$, there exists $y\in A\setminus f(P_1)$ and  there exists $z\in f(P_2)$ such that $f(e), y\in Sz \subseteq f(P_2)$. Hence $f(P_1)\subseteq Sz\subseteq f(P_2)$, and
so $A=f(P_1)\cup f(P_2)= f(P_2)$, a contradiction.

Since $P$ is indecomposable, $P\cong Se$ for an idempotent $e\in S$ by Lemma~\ref{indec-proj}, which implies that $A$ is cyclic. Furthermore, as $Se$ is a projective act also in the category $S-\oA$ by \cite[Proposition 17.8]{KKM}, the morphism $f$ constitutes a projective cover in $S-\oA$.
\end{proof}

\begin{theorem}\label{char_0perfect}
A monoid is left perfect if and only if  it is left 0-perfect.
\end{theorem}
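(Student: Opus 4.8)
The plan is to prove the equivalence by establishing both implications, leveraging the machinery already developed around the functor $\mF$. One direction is Proposition~\ref{0-perfect is perfect}, which states that left 0-perfect implies left perfect; so the content of the theorem is really the converse, that left perfectness forces left 0-perfectness. I would state at the outset that it remains only to show that if $S$ is left perfect, then every act $C\in S-\A_0$ admits a projective cover in $S-\A_0$.

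First I would fix an arbitrary $C\in S-\A_0$. Since $C$ already has a unique zero element, it is naturally an object of $S-\A$, and by Corollary~\ref{im_cyclic}-style reasoning we have $\mF(C)\cong C$ (the projection $\pi_{0C}$ is bijective because $C$ has only one zero). The idea is to produce a projective cover of $C$ in the ambient category $S-\A$ using left perfectness, and then push it through $\mF$ using Lemma~\ref{cover}. Concretely, by left perfectness of $S$ there exists a projective cover $f:P\to C$ in $S-\A$, and Lemma~\ref{cover} then yields that $(\mF(P),\mF(f))$ is a projective cover of $\mF(C)$ in $S-\A_0$. Since $\mF(C)\cong C$, this gives a projective cover of $C$ in $S-\A_0$, as desired.

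The subtle point I would watch carefully is whether left perfectness as defined (projective covers for all $A\in S-\oA$) genuinely applies to $C$ viewed inside $S-\A$, and whether the cover $(P,f)$ obtained there interacts correctly with the Rees projection. The key technical input is the hypothesis $\ker\pi_{0C}$ being trivial (so $\mF(C)=C$), which guarantees that the cover of $\mF(C)$ transports back to a genuine cover of $C$ rather than of a proper quotient. I would also need to confirm that $\mF(P)$ lands in $S-\A_0$ correctly and that projectivity is preserved, but both are already supplied by Lemma~\ref{PreserveProj} and Lemma~\ref{cover}.

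The main obstacle I anticipate is not in the formal chaining of lemmas but in verifying the minimality (superfluousness) condition of the cover survives the passage through $\mF$; however, this is precisely the content of Lemma~\ref{cover}, which shows that proper subacts of $\mF(P)$ cannot surject onto $\mF(A)$ by pulling them back along $\pi_{0P}$ and using that $f$ is a cover. Thus the heart of the argument is already packaged, and the proof of the theorem itself should reduce to invoking Proposition~\ref{0-perfect is perfect} for one direction and combining Lemma~\ref{cover} with the observation $\mF(C)\cong C$ for the other. I would close by remarking that the symmetry between the two categories established here is what makes the perfectness notions coincide.
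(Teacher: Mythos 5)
Your proposal is correct and follows essentially the same route as the paper: the reverse implication is Proposition~\ref{0-perfect is perfect}, and the forward implication takes a projective cover of $C\in S-\A_0$ in the ambient category (which exists by left perfectness, since $C$ is an object of $S-\oA$) and transports it via Lemma~\ref{cover}, using that $\pi_{0C}$ is bijective so $\mF(C)\cong C$. The paper states this more tersely, but the substance is identical.
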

\begin{proof}
The direct implication follows from Lemma~\ref{cover} and the reverse one is proven by Proposition~\ref{0-perfect is perfect}.
\end{proof}

\begin{example} \rm
By \cite{I}, the examples of monoids which are left perfect (the argument does not require the zero) comprise: monoid of square matrices over a division ring, and finite monoids. By Theorem~\ref{0-perfect is perfect}, the former is also left-0-perfect, while the latter in case it contains a zero element.
	
On the other hand, in the case of another class of perfect monoids (without zero) mentioned in \cite{I}, groups, the presented result cannot be employed, as adding 0 to a group may in general change the situation notably (see Example~\ref{exm_Q} below).
\end{example}

\section{Steady monoids}

An $S$-act $A$ is called \emph{hollow} if each of its proper subacts is superfluous (cf. \cite[Definition 3.1]{Kho-Ro}). 
It is easy to see that hollow acts are indecomposable in both categories $S-\oA$ and $S-\A_0$ (see \cite[Theorem 3.4]{Kho-Ro} and \cite[Propositions 5.6 and 6.6]{DZ21}).

In compliance with \cite{DZ21} call an act $C \in S-\oA$ ($\in S-\A_0$, resp.) \emph{compact}, if the corresponding covariant $\Hom$-functor commutes with coproducts, i.e. for any family $(A_i, i \in I)$ of $S$-acts in the given category, for the natural functor $\Hom(C, - ): S-\oA \to \Set$ ($S-\A_0 \to \Set$, resp.) we have a surjective natural morphism
\[ \Hom(C, \coprod_{i\in I} A_i) \to \coprod_{i\in I} \Hom(C, A_i)\to 0.\]

Recall that an act in the category $S-\oA$ is compact if and only if it is hollow by \cite[Proposition 6.6]{DZ21}.
It is easy to see that cyclic acts are compact and
we say that a monoid $S$  is {\it left steady} (resp. 
{\it left $0$-steady}) if every compact act in the category $S-\oA$
(resp. $S-\A_0$) is cyclic (see \cite[6.2]{DZ21}).

\begin{lemma}\label{hollow}
Let $A$ be an act in $S-\A$ such that $0A$ is superfluous in $A$.
Then $A$ is hollow in the category  $S-\A$  if and only if $\mF(A)$ is hollow in the category $S-\A_0$.
\end{lemma}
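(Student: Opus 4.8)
The plan is to exploit the structure of $\mF(A)$ as the Rees factor $A/0A$ together with the standing hypothesis that $0A$ is superfluous in $A$. Recall that $A$ is hollow precisely when every proper subact is superfluous, so I would attack the two implications by transporting proper subacts back and forth along the projection $\pi_{0A}$. The key bookkeeping device is the correspondence between subacts of $A$ containing $0A$ and subacts of $\mF(A)$: for a subact $Q\subseteq\mF(A)$ the preimage $\tilde Q=\pi_{0A}^{-1}(Q)$ is a subact of $A$ with $0A\subseteq\tilde Q$ and $\pi_{0A}(\tilde Q)=Q$, and conversely; moreover $Q=\mF(A)$ iff $\tilde Q=A$. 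This is exactly the correspondence already used in the proof of Lemma~\ref{cover}, so I would reuse that machinery.

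First I would prove the forward direction. Assume $A$ is hollow and let $Q\subsetneq\mF(A)$ be a proper subact; I want to show $Q$ is superfluous, i.e. $Q\cup Q'\ne\mF(A)$ for every proper subact $Q'$. Passing to preimages, $\tilde Q,\tilde{Q'}$ are proper subacts of $A$ containing $0A$, and since $\pi_{0A}$ is surjective one checks $\pi_{0A}^{-1}(Q\cup Q')=\tilde Q\cup\tilde{Q'}$. Because $A$ is hollow, $\tilde Q$ is superfluous in $A$, so $\tilde Q\cup\tilde{Q'}\ne A$; applying $\pi_{0A}$ and using that $\pi_{0A}$ reflects equality with the whole act (as $0A\subseteq\tilde Q\cup\tilde{Q'}$), I get $Q\cup Q'\ne\mF(A)$. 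Hence $\mF(A)$ is hollow. The superfluousness hypothesis on $0A$ is not really needed for this direction; it is the converse where it does the work.

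For the converse, assume $\mF(A)$ is hollow and let $B\subsetneq A$ be a proper subact; I must show $B$ is superfluous, i.e. $B\cup C\ne A$ for every proper subact $C\subsetneq A$. The obstacle is that $B$ and $C$ need not contain $0A$, so they do not descend cleanly to $\mF(A)$. To handle this I would first dispose of the case where $0A\subseteq B\cup C$ by pushing down to $\mF(A)$: the images $\pi_{0A}(B),\pi_{0A}(C)$ are proper subacts (properness of $\pi_{0A}(B)$ follows since $0A$ superfluous and $B\ne A$ forces $B\cup 0A\ne A$, so $B$ cannot surject onto $\mF(A)$), their union is proper in the hollow act $\mF(A)$, and since $0A\subseteq B\cup C$ the preimage of that union is exactly $B\cup C$, giving $B\cup C\ne A$. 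The remaining case is $0A\not\subseteq B\cup C$; but then $B\cup C\cup 0A=A$ would contradict superfluousness of $0A$ applied to the proper subact $B\cup C$, while $0A\subseteq B\cup C$ is excluded, so in fact $B\cup C\ne A$ directly. Assembling the cases yields that $B$ is superfluous, so $A$ is hollow.

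The main obstacle, as indicated, is the converse: one must carefully separate the case analysis according to whether $0A$ lies inside the candidate union $B\cup C$, and it is precisely the hypothesis that $0A$ is superfluous in $A$ that closes the awkward case where $0A$ is not absorbed. I expect the forward direction and the descent computations to be routine diagram-chasing along $\pi_{0A}$; the only subtlety to verify with care is that $\pi_{0A}(X)=\mF(A)$ with $0A\subseteq X$ forces $X=A$, which is immediate from surjectivity of $\pi_{0A}$ and the definition of the Rees factor.
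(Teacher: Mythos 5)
Your proof is correct and takes essentially the same route as the paper's: both directions transport subacts along $\pi_{0A}$ (preimages upward, images downward), and your converse uses superfluousness of $0A$ exactly as the paper does, namely to pass from ``$B\cup 0A=A$'' to ``$B=A$''; the paper merely avoids your case analysis by phrasing hollowness as ``$A=B_1\cup B_2$ forces $B_i=A$ for some $i$''. The only blemish is in your Case 2, where the appeal to superfluousness of $0A$ is itself superfluous: $0A\not\subseteq B\cup C$ already gives $B\cup C\ne A$ outright, since $0A\subseteq A$.
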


\begin{proof} Let $A$ be hollow and $\mF(A)=B_1\cup B_2$ for subacts $B_i$, $i=1,2$. Then $A=\pi_{0A}^{-1}(B_1)\cup \pi_{0A}^{-1}(B_2)$, hence 
there exists $i$ such that $A=\pi_{0A}^{-1}(B_i)$ and so 
$\mF(A)=\pi_{0A}(A)=B_i$. Thus $\mF(A)$ is hollow.

Conversely, suppose that $A=B_1\cup B_2$ for subacts $B_i$ of $A$ and $i=1,2$. Then $\mF(A)=\pi_{0A}(B_1)\cup \pi_{0A}(B_2)$ and so 
there exists $i$ for which  $\mF(A)=\pi_{0A}(B_i)$. It implies that
$B_i\cup 0A=A$, thus $B_i=A$ since $0A$ is superfluous in $A$.
\end{proof}

Recall a description of the monoid structure via a property of hollow acts, which is employed in the next result:

\begin{lemma}\cite[Lemma 3.8]{Kho-Ro}\label{Kho-Ro}
A monoid $S$ satisfies the ascending chain condition on cyclic subacts of an arbitrary $S$-act if and only if every hollow act in $S-\A$  is cyclic.
\end{lemma}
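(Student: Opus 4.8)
The statement is an equivalence, so the plan is to prove each implication separately, and the key fact to keep in mind throughout is the reformulation of superfluity: a subact $B$ of $A$ is superfluous precisely when $B\cup C=A$ forces $C=A$ for every subact $C$. For the implication that the ascending chain condition on cyclic subacts implies every hollow act is cyclic, I would argue by contradiction: assuming a hollow act $A$ that is not cyclic, I plan to manufacture an infinite strictly ascending chain of cyclic subacts inside $A$ itself, which directly violates the chain condition.

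The heart of this direction is a single step that can then be iterated. Fix any $a\in A$. Since $A$ is not cyclic, $Sa\subsetneq A$ is a proper subact, hence superfluous by hollowness. I would then test it against the subact $C=\bigcup_{x\in A\setminus Sa}Sx$ generated by its complement; by construction $Sa\cup C=A$, so superfluity forces $C=A$. In particular $a\in C$, which means $a\in Sx$ for some $x\notin Sa$, and therefore $Sa\subseteq Sx$ with the inclusion strict because $x\in Sx\setminus Sa$. Starting from any $a_1$ and repeating this step (each $Sa_n$ is again proper since $A$ is non-cyclic) yields $Sa_1\subsetneq Sa_2\subsetneq\cdots$, the desired contradiction.

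For the converse, assuming every hollow act is cyclic, I would again argue by contradiction from a failure of the chain condition: suppose some $S$-act contains an infinite strict chain $Sa_1\subsetneq Sa_2\subsetneq\cdots$ of cyclic subacts and set $B=\bigcup_n Sa_n$. Then $B$ is locally cyclic, since any two of its elements already lie in a common $Sa_n$, and $B$ is not cyclic, since a single generator would have to lie in some $Sa_k$ and collapse the chain. It then remains to check the small structural fact that a locally cyclic act is hollow: if $B=C\cup D$ with $C,D$ proper, pick $x\notin C$ and $y\notin D$, take $c$ with $x,y\in Sc$, and observe that $c\in C$ or $c\in D$ forces $Sc\subseteq C$ or $Sc\subseteq D$, contradicting the choice of $x$ or of $y$. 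Thus $B$ is a non-cyclic hollow act, contradicting the hypothesis.

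The routine parts are the verifications that unions of subacts are again subacts and that the constructed inclusions are genuinely strict. The step I expect to be the crux is the superfluity argument in the second paragraph: testing the superfluous subact $Sa$ against the subact generated by its complement is exactly what conjures a strictly larger cyclic subact out of non-cyclicity, and it is this move that drives the whole forward implication. The only other point needing care is the locally-cyclic-implies-hollow sublemma, where the essential input is that $Sc$, being generated by a single element of $C\cup D$, must land entirely in $C$ or entirely in $D$.
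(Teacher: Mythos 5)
The paper does not actually prove this lemma --- it is quoted verbatim from \cite[Lemma 3.8]{Kho-Ro} and used as a black box in the proof of Theorem~\ref{char_0steady} --- so there is no in-paper argument to compare against. Your proof is, however, correct and self-contained. Both directions check out: in the forward direction, testing the superfluous proper subact $Sa$ against $C=\bigcup_{x\in A\setminus Sa}Sx$ (which is a nonempty subact precisely because $Sa$ is proper) correctly forces $C=A$, hence $a\in Sx$ for some $x\notin Sa$ and a strict inclusion $Sa\subsetneq Sx$, and iterating (each $Sa_n$ stays proper by non-cyclicity) yields the forbidden chain. In the converse, the union $B=\bigcup_n Sa_n$ of a strictly ascending chain is indeed locally cyclic and non-cyclic, and your sublemma that locally cyclic implies hollow is sound: a generator $c$ of a cyclic subact containing both witnesses $x\notin C$ and $y\notin D$ must land in $C$ or $D$, dragging $Sc$ (hence $x$ or $y$) with it. This sublemma is essentially the content of \cite[Theorem 3.4]{Kho-Ro}, which the present paper also invokes, so your route is in the same spirit as the source. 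The only stylistic caveat is the implicit use of dependent choice in building the infinite chain in the forward direction, which is standard for arguments of this kind and not worth belabouring.
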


\begin{theorem}\label{char_0steady}
A monoid $S$ is left 0-steady  if and only if it satisfies the ascending chain condition on cyclic subacts.
\end{theorem}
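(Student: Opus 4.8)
The plan is to prove both implications by passing through the category $S-\A$ and using the functor $\mF$ together with the two characterizations already available: Lemma~\ref{hollow} (hollowness is preserved and reflected by $\mF$ when $0A$ is superfluous) and Lemma~\ref{Kho-Ro} (the a.c.c.\ on cyclic subacts is equivalent to every hollow act in $S-\A$ being cyclic). The bridge between $0$-steadiness (a statement about compact, i.e.\ hollow, acts in $S-\A_0$) and hollow acts in $S-\A$ will be supplied by the observation, recorded just before the statement, that compact in $S-\oA$ coincides with hollow, and I expect the analogous fact---compactness in $S-\A_0$ coincides with hollowness in $S-\A_0$---to be available or immediately derivable from \cite{DZ21}.

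First I would prove the easier direction: assume $S$ satisfies the a.c.c.\ on cyclic subacts and show $S$ is left $0$-steady. Take a compact act $C\in S-\A_0$; then $C$ is hollow in $S-\A_0$. I want to produce an act in $S-\A$ whose image under $\mF$ is $C$ and which is hollow. Since $C$ already lies in $S-\A_0\subseteq S-\A$ and has a single zero, $0C=\{\theta_C\}$ is superfluous in $C$ (for an indecomposable act this is Lemma~\ref{substantial}(4), and a hollow act is indecomposable). Applying Lemma~\ref{hollow} in the converse direction, hollowness of $\mF(C)=C$ in $S-\A_0$ forces $C$ to be hollow in $S-\A$. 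By the a.c.c.\ hypothesis and Lemma~\ref{Kho-Ro}, every hollow act in $S-\A$ is cyclic, so $C$ is cyclic; hence $S$ is left $0$-steady.

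For the converse, assume $S$ is left $0$-steady and aim to show every hollow act in $S-\A$ is cyclic, invoking Lemma~\ref{Kho-Ro} to conclude the a.c.c. Let $A\in S-\A$ be hollow. A hollow act is indecomposable, so by Lemma~\ref{substantial}(4) the subact $0A$ is superfluous in $A$; Lemma~\ref{hollow} then gives that $\mF(A)$ is hollow in $S-\A_0$, hence compact, hence---by $0$-steadiness---cyclic. Now I pull this back along $\mF$ using Lemma~\ref{preim_cyclic}: a nonzero cyclic $\mF(A)$ forces $A$ to be cyclic \termin, i.e.\ $A\cong A'\,\dot{\cup}\,\dot{\bigcup}_{i\in I}\,_S\theta$ with $A'$ cyclic. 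But $A$ is indecomposable, so the decomposition is trivial and $A=A'$ is genuinely cyclic. (The degenerate case $\mF(A)=\theta$ means $A=0A$, which is hollow only when it is a single zero, hence cyclic.) By Lemma~\ref{Kho-Ro}, $S$ satisfies the a.c.c.\ on cyclic subacts.

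The main obstacle I anticipate is the clean identification of \emph{compact} with \emph{hollow} inside $S-\A_0$: the excerpt states this equivalence explicitly only for $S-\oA$ (via \cite[Proposition 6.6]{DZ21}), so I must either cite the parallel statement for $S-\A_0$ from \cite{DZ21} or argue it directly. The remaining delicate points are purely bookkeeping---checking that the superfluousness hypothesis of Lemma~\ref{hollow} is genuinely met (which reduces to indecomposability of hollow acts, already noted), and handling the trivial/degenerate acts so that ``cyclic \termin'' upgrades to honestly ``cyclic'' via indecomposability. None of these should require real computation once the compact/hollow dictionary in $S-\A_0$ is in hand.
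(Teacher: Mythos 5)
Your proposal is correct and follows essentially the same route as the paper's proof: reduce via Lemma~\ref{Kho-Ro} to the statement that every hollow act in $S-\A$ is cyclic, and transfer hollowness back and forth along $\mF$ using Lemma~\ref{substantial}(4) and Lemma~\ref{hollow}, with Lemma~\ref{preim_cyclic} plus superfluousness/indecomposability upgrading ``cyclic up to zeros'' to ``cyclic.'' The compact--hollow equivalence in $S-\A_0$ that you flag as a potential gap is exactly what the paper cites from \cite[Proposition 6.6]{DZ21}, so no issue remains.
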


\begin{proof} By Lemma~\ref{Kho-Ro} it is enough to prove that 
$S$ is left 0-steady if and only if every hollow $S$-act in $S-\A$ is cyclic.

Let $S$ be left 0-steady and let $A$ be a hollow $S$-act in $S-\A$.
Since $A$ is indecomposable, $0A$ is superfluous by Lemma~\ref{substantial}. Applying Lemma~\ref{hollow} we obtain that $\mF(A)$ is hollow in the category $S-\A_0$, which implies that $\mF(A)$ is compact in $S-\A_0$  by \cite[Proposition 6.6]{DZ21}. 
Thus $\mF(A)=\pi_{0A}(A)$ is cyclic by the hypothesis and by Lemma~\ref{preim_cyclic} we get $A=Sa\cup 0A$.
Finally, since $0A$ is superfluous, $A$ is cyclic. 

Conversely, suppose that $A$ is a compact act in the category $S-\A_0$.
Then it is hollow by \cite[Proposition 6.6]{DZ21}, and  so indecomposable. Now, it follows from Lemmas~\ref{substantial} and \ref{hollow}
 that $A\cong \mF(A)$ is hollow in $S-\A$, hence it is cyclic by the hypothesis.
\end{proof}

We conclude the paper by an example.
 
\begin{example}\label{exm_Q} \rm
Any group $G$ is right steady by \cite[Example 6.7(1)]{DZ21}, however 0-steadiness of a monoid $G_0$ obtained from $G$ by adding a zero element depends on the structure of subgroups by the last theorem. In particular $\Q^*$ is steady, while $\Q$ is not 0-steady.
\end{example}

\end{document}